\documentclass[12pt]{article}
\usepackage{amsmath, amsfonts, amsthm, amssymb, color, hyperref, extarrows, enumitem, nicefrac,blindtext, mathtools,}

\textwidth=17.5cm
\textheight=24cm
\parindent=16pt
\oddsidemargin=-0.5cm
\evensidemargin=-0.5cm
\topmargin=-2cm

\newtheorem{theorem}{Theorem}[section]
\newtheorem{definition}[theorem]{Definition}

\newtheorem{lem}[theorem]{Lemma}
\newtheorem{pro}[theorem]{Proposition}

\numberwithin{equation}{section}

\newtheorem{remark}[theorem]{Remark}

\newtheorem{example}{Example}

\usepackage[hyperpageref]{backref}

\usepackage{cite}

\hypersetup{
	colorlinks   = true,
	citecolor    = magenta}
	
\begin{document}
\title{\vspace{-1cm} \bf Optimal $L^p$  regularity for $\bar\partial$ on the Hartogs triangle  \rm}
\author{ Yuan Zhang}
\date{}

\maketitle

\begin{abstract}
In this paper, we prove weighted  $L^p$ estimates for the canonical solutions on product domains. As an application, we show that if $p\in [4, \infty)$, the  $\bar\partial$ equation on the Hartogs triangle with $L^p$ data admits $L^p$ solutions with the desired estimates.  For any $\epsilon>0$, by constructing an example with $L^p$ data but having no $L^{p+\epsilon}$ solutions, we verify  the sharpness of the $L^p$ regularity on the Hartogs triangle. 

\end{abstract}

\renewcommand{\thefootnote}{\fnsymbol{footnote}}
\footnotetext{\hspace*{-7mm}
\begin{tabular}{@{}r@{}p{16.5cm}@{}}
& 2010 Mathematics Subject Classification. Primary 32W05; Secondary 32A25.\\
& Key words and phrases.
  $\bar{\partial}$ equation, Hartogs triangle, product domains, canonical solution, $L^p$ regularity,  Muckenhoupt's class.
\end{tabular}}

\section{Introduction}
Let $\mathbb{H}$ be the Hartogs triangle defined by $$ \mathbb{H} = \{(z_1, z_2)\in \mathbb{C}^2: |z_1|< |z_2| < 1\}. $$
Being a special bounded pseudoconvex domain without Lipschitz boundary,  the  Hartogs triangle has  played an important role in complex analysis and attracted considerable attention. See \cite{Shaw, CZ, BFLS, HW1, HW2, ChZ, Ch1, Ch2, CKY} et al. Among others a well-known result by Chaumat-Chollet \cite{ChC} in function theory   states  that  the  $\bar\partial$ problem  with smooth data on $\overline{\mathbb H}$ has no smooth solutions in general. 
On the other hand, when restricted at each H\"older level,  they  showed that the $\bar\partial$ equation   admits solutions in the same H\"older space as that of the data. (Note this does not contradict with the global irregularity, as the solution operators  are different at different H\"older levels.)

In view of a biholomorphism between the punctured bidisc and the Hartogs triangle,  a natural machinery was introduced  in works of Ma-Michel \cite{MM} and   Chakrabarti-Shaw \cite{CS2}   to treat with the  $\bar\partial$ problem on the Hartogs triangle. That is, using the biholomorphism  to pull back the data and  obtain a $\bar\partial$ equation on the punctured bidisc. Upon solving it via available integral representations on the punctured bidisc (or, on the bidisc after extension), use the biholomorphism again to push forward the solutions onto the Hartogs triangle. See also  a  recent joint work \cite{YZ} with Yuan   for some applications  to a general class of quotient domains. Since the push-forward and pull-back operators via the biholomorphism introduce a certain (nontrivial) weight, the regularity problem of $\bar\partial$ on the Hartogs triangle is reduced to a weighted $\bar\partial$ regularity problem  on the underlying bidisc. 

Motivated by these works and the machinery,  we study  the  weighted optimal $L^p$ estimates for the canonical solutions on general product domains, when the weights lie in a class of Muckenhoupt spaces $A^*_p$ (see Definition \ref{aps}).  Recall that the canonical solutions to $\bar\partial$ are the unique square-integrable solutions satisfying  the non-homogeneous Cauchy-Riemann equation $\bar\partial u = \mathbf f$ that are orthogonal to  $ker(\bar\partial)$.

\begin{theorem}\label{solution product}
Let $\Omega: =D_1\times \cdots\times D_n$ be a bounded product domain in $\mathbb C^n$,  where each $D_j$ has $C^2$ boundary, $j=1, \ldots, n$. Assume $\mu\in  A^*_p$, $ 1<p<\infty$. Then the canonical solution operator $T_c$ to $\bar\partial$ on $\Omega$ extends as a bounded operator from $L^{p}(\Omega, \mu)$ into itself.  Namely, there exists a constant $C$ dependent only on $\Omega$, $p$ and the $A^*_p$ constant of $\mu$ such that  for any $(0,1) $ forms $\mathbf f\in L^p(\Omega, \mu)$,
$$\|T_c \mathbf f\|_{L^p(\Omega, \mu)}\le C \|\mathbf f\|_{L^p(\Omega, \mu)}.$$
\end{theorem}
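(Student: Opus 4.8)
The plan is to reduce the weighted boundedness of $T_c$ on the product domain to weighted $L^p$ estimates for two classes of one-variable operators—the solid Cauchy transform and the Bergman projection, each acting in a single factor—and then to glue these estimates together by Fubini's theorem, using the slice-wise $A_p$ control built into the definition of $A^*_p$.

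First I would record an explicit formula for the canonical solution. For each $j$ let $C_j$ denote the solid Cauchy transform in the variable $z_j$ (with the other variables frozen), let $B_j$ be the Bergman projection of $D_j$ acting in $z_j$, and set $S_j := (I - B_j)C_j$, the one-variable canonical solution operator, so that $\partial_{\bar z_j} S_j = I$ and $S_j$ maps into the orthogonal complement of the functions holomorphic in $z_j$. For a $\bar\partial$-closed $(0,1)$ form $\mathbf f = \sum_{j=1}^n f_j\, d\bar z_j$ on $\Omega$ I claim
$$T_c \mathbf f = \sum_{j=1}^n \Big(\prod_{k=1}^{j-1} B_k\Big) S_j f_j.$$
Differentiating term by term and using the compatibility relations $\partial_{\bar z_\ell} f_j = \partial_{\bar z_j} f_\ell$ (equivalently $\bar\partial \mathbf f = 0$) together with the Cauchy--Pompeiu identity $C_j \partial_{\bar z_j} g = g - H_j g$, where $H_j g$ is holomorphic in $z_j$, one checks that this $u$ solves $\bar\partial u = \mathbf f$; since every summand lies in the orthogonal complement of $\mathcal O(\Omega)$, it is the canonical solution. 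This reduces the theorem to proving that each $C_j$ and each $B_j$ is bounded on $L^p(\Omega,\mu)$, because $T_c$ is then a finite sum of compositions of at most $n$ such operators.

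Next I would establish the two one-variable weighted estimates on a single $C^2$ planar factor $D_j$. The Cauchy transform has the weakly singular kernel $(\zeta - z_j)^{-1}$, which is dominated by a fractional integral of order one on $\mathbb R^2$; its $L^p$-boundedness against any $A_p$ weight follows from standard Muckenhoupt theory. For the Bergman projection the point is that, on a $C^2$ bounded planar domain, $B_j$ is a Calder\'on--Zygmund-type operator: its kernel satisfies the requisite size and H\"ormander smoothness bounds (which can be read off from the boundary behaviour of the Bergman kernel, or transferred from the disc via the Riemann map), so $B_j$ is bounded on $L^p(D_j, w)$ for every $w \in A_p(D_j)$, with constant depending only on $D_j$, $p$ and the $A_p$ constant of $w$.

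Finally I would globalize by slicing. Writing $z = (z_j, z_j')$ with $z_j'$ the remaining variables and applying Fubini, the $L^p(\Omega,\mu)$ norm of $B_j F$ (or $C_j F$) is the integral over $z_j'$ of the one-variable weighted norm of $B_j$ applied to the slice $F(\cdot, z_j')$ against the weight $z_j \mapsto \mu(z_j, z_j')$. The definition of $A^*_p$ is precisely what guarantees that, for almost every fixed $z_j'$, this sliced weight lies in $A_p(D_j)$ with an $A_p$ constant bounded uniformly by the $A^*_p$ constant of $\mu$. Applying the one-variable estimate slicewise with this uniform constant and integrating back in $z_j'$ yields the boundedness of $C_j$ and $B_j$ on $L^p(\Omega,\mu)$; composing the at most $n$ factors in the formula for $T_c$ and summing over $j$ gives the claimed bound, with constant depending only on $\Omega$, $p$ and the $A^*_p$ constant of $\mu$, and a routine density argument from smooth forms extends the estimate to all of $L^p(\Omega,\mu)$. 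The main obstacle I anticipate is the weighted boundedness of the Bergman projection on a general $C^2$ domain together with the verification that the slicewise $A_p$ constants stay under control; this Calder\'on--Zygmund/$A_p$ input, rather than the algebra of the solution formula, is the crux.
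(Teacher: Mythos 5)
Your operator identity $T_c\mathbf f=\sum_{j=1}^n\bigl(\prod_{k<j}B_k\bigr)(I-B_j)C_jf_j$ is correct: the telescoping computation using $\partial_{\bar z_j}S_j=I$, the compatibility relations, and Cauchy--Pompeiu shows it solves $\bar\partial u=\mathbf f$, and since the $j$-th summand is annihilated by $B_j$, hence by the Bergman projection $B_1\cdots B_n$ of $\Omega$, it is indeed the canonical solution (modulo the usual distributional care in justifying these identities for non-smooth data). Your slicing step is also exactly the paper's mechanism: Definition \ref{aps} is designed so that almost every slice $\mu(\cdot,\hat z_j)$ is an $A_p$ weight on $\mathbb C$ with uniformly controlled constant, and Fubini upgrades one-variable weighted bounds to $\Omega$. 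Where you genuinely diverge is in what one-variable operators you must control. The paper never touches the Bergman projection: it quotes the explicit kernel representation \eqref{ca} from \cite{DPZ}, whose derivatives obey the pointwise fractional-integral bound \eqref{es}, so the entire theorem reduces to the weighted boundedness of the Riesz-type operators $R_\alpha$ (Proposition \ref{r1}, via a Hedberg-type argument and the open-end property of $A_p$, then Theorem \ref{rn} by slicing). In the paper's route the projections are already absorbed into the kernel estimate, and the only harmonic-analysis input is a fractional integral against an $A_p$ weight --- which is also all that your Cauchy-transform step requires.

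The price of your route is the second input: weighted boundedness of the Bergman projection of a bounded $C^2$ planar domain on $L^p(D_j,w)$ for every $w\in A_p$, with constant controlled by the $A_p$ constant of $w$. This claim is true, but it is the real analytic content of your proof, and neither of your suggested justifications suffices as written. The Riemann-map transfer presumes $D_j$ is simply connected, which the theorem does not assume: a bounded $C^2$ planar domain may be (finitely) multiply connected, e.g.\ an annulus. The Calder\'on--Zygmund route requires size and gradient estimates for the Bergman kernel of a $C^2$, possibly multiply connected, planar domain; these are available (for instance via $K(z,w)=-\tfrac{2}{\pi}\partial_z\partial_{\bar w}G(z,w)$ and Green's function estimates, or via conformal maps onto circular domains), but they must be proved or precisely attributed, not read off. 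Note also that even on the disc the natural class for the Bergman projection is Bekoll\'e--Bonami $B_p$ rather than $A_p$; what rescues your statement is that the $A_p$ condition of Definition \ref{aps}, taken over \emph{all} discs, dominates the tent condition defining $B_p$. None of this is unfixable, so I would call your proposal a viable and genuinely different proof in outline; but as it stands, this Bergman-projection step is a gap, and it is precisely the work that the paper's use of the kernel estimate \eqref{es} was engineered to avoid.
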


The main ingredient of the proof is  the weighted $L^p$ estimates for some Riesz-type integrals, as well as a pointwise estimate of the  canonical solution kernel established by Dong, Pan and the author \cite{DPZ} (see also an observation by Yuan \cite{Y}). According to an example of Kerzman (Example \ref{ex2}), the theorem gives  the optimal weighted $L^p$ regularity  on product domains in terms of the canonical solutions.  In particular, since $1\in A_p^*$ for all $p>1$, the canonical solutions  provides optimal solutions to $\bar\partial$ in the (unweighted) $L^p$ category as well (Example \ref{ex3}), unlike  another well-investigated solution operator along the line of Henkin which by a result of Chen-McNeal \cite{ChM1} is unbounded in $L^p, p<2$. 
\medskip

As an application of Theorem \ref{solution product}, we   obtain the following (unweighted) $L^p$ regularity for $\bar\partial$ on the Hartogs triangle if $p\ge 4$.

\begin{theorem}\label{main}
There exists a solution operator $T$ such that for any $\bar\partial$-closed $(0, 1)$ form $\mathbf f\in L^{p}(\mathbb H), 4\le p<\infty$,  $T\mathbf f\in L^p(\mathbb H)$ and solves  $\bar\partial u =\mathbf f$ on $\mathbb H$. Moreover, there exists a constant $C$ dependent only on   $p$  such that  for any $\bar\partial$-closed $(0,1)$ form $\mathbf f\in L^p(\mathbb H)$, \begin{equation*}
    \|T\mathbf f\|_{L^p(\mathbb H)}\le C\|\mathbf f\|_{L^p(\mathbb H)}.
\end{equation*}  
\end{theorem}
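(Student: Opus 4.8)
The plan is to transfer the problem to the bidisc via the standard biholomorphism and then apply Theorem \ref{solution product} with one explicit weight. Let $\Phi(w_1,w_2)=(w_1w_2,w_2)$, which maps the punctured bidisc $\mathbb D\times\mathbb D^*$ biholomorphically onto $\mathbb H$, with inverse $(z_1,z_2)\mapsto(z_1/z_2,z_2)$. Its complex Jacobian determinant is $w_2$, so the real Jacobian of the change of variables is $|w_2|^2$; consequently, for any $h$ on $\mathbb H$ one has $\|h\|_{L^p(\mathbb H)}=\|h\circ\Phi\|_{L^p(\mathbb D^2,\mu)}$ with the weight $\mu:=|w_2|^2$. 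This identity dictates the choice of weight, and I would \emph{define} the solution operator by $T\mathbf f:=\big(T_c\,\Phi^*\mathbf f\big)\circ\Phi^{-1}$, where $T_c$ is the weighted canonical solution operator furnished by Theorem \ref{solution product} on $\Omega=\mathbb D^2$ (both factors having smooth boundary) with weight $\mu$.

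First I would pull back the data. Writing $\mathbf f=f_1\,d\bar z_1+f_2\,d\bar z_2$ and using $d\bar z_1=\bar w_2\,d\bar w_1+\bar w_1\,d\bar w_2$, $d\bar z_2=d\bar w_2$, the pullback $\mathbf g:=\Phi^*\mathbf f$ has coefficients $g_1=(f_1\circ\Phi)\bar w_2$ and $g_2=(f_1\circ\Phi)\bar w_1+(f_2\circ\Phi)$. Since $|w_1|,|w_2|<1$ on the bidisc, the change-of-variables identity applied termwise gives at once $\|\mathbf g\|_{L^p(\mathbb D^2,\mu)}\le C\|\mathbf f\|_{L^p(\mathbb H)}$ (for $g_1$ one even gains the factor $|w_2|^p\le 1$).

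Next comes the one genuinely delicate structural point: to invoke Theorem \ref{solution product} on the full bidisc I must know that $\mathbf g$ is a $\bar\partial$-closed $(0,1)$ form on all of $\mathbb D^2$, not merely on $\mathbb D\times\mathbb D^*$, since $\mathbb D^*$ is not an admissible factor. Here I would argue that $\mathbf g$ extends across the removable set $\{w_2=0\}$: for $p>2$ a H\"older estimate pairing $|f_j\circ\Phi|\,|w_2|^{2/p}$ against $|w_2|^{-2/p}$ (whose $p'$-power $|w_2|^{-2/(p-1)}$ is locally integrable in $w_2\in\mathbb C$ exactly when $p>2$) shows $f_j\circ\Phi\in L^1_{\mathrm{loc}}(\mathbb D^2)$, hence $\mathbf g\in L^1_{\mathrm{loc}}(\mathbb D^2)$. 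Because $\{w_2=0\}$ is a complex hypersurface and is therefore $\bar\partial$-negligible for locally integrable forms (the defect terms $\int\mathbf g\wedge\bar\partial\chi_\varepsilon$ produced by cutoffs $\chi_\varepsilon$ supported away from $\{w_2=0\}$ tend to zero), the distributional identity $\bar\partial\mathbf g=0$, valid off $\{w_2=0\}$, persists across it. Thus $\mathbf g$ is admissible data on $\mathbb D^2$.

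It then remains to check the hypothesis of Theorem \ref{solution product}, namely that $\mu=|w_2|^2\in A_p^*$ in the sense of Definition \ref{aps}. This is where the restriction $p\ge 4$ is forced: a direct computation of the relevant $A_p^*$ integrals for the power weight $|w_2|^2$ (constant in $w_1$ and a radial power in $w_2$) shows that membership holds exactly on the range $p\ge 4$ and fails otherwise, which is the source of the threshold. Granting this, Theorem \ref{solution product} produces $v:=T_c\mathbf g$ with $\bar\partial v=\mathbf g$ on $\mathbb D^2$ and $\|v\|_{L^p(\mathbb D^2,\mu)}\le C\|\mathbf g\|_{L^p(\mathbb D^2,\mu)}$. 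Restricting to $\mathbb D\times\mathbb D^*$ and setting $u=v\circ\Phi^{-1}$, biholomorphic invariance of $\bar\partial$ yields $\bar\partial u=\mathbf f$ on $\mathbb H$, while the change-of-variables identity gives $\|u\|_{L^p(\mathbb H)}=\|v\|_{L^p(\mathbb D^2,\mu)}\le C\|\mathbf g\|_{L^p(\mathbb D^2,\mu)}\le C'\|\mathbf f\|_{L^p(\mathbb H)}$, the desired estimate. The main obstacle I anticipate is pinning down the sharp weight membership in the last step, i.e. verifying that $p\ge 4$ is precisely the admissible range for $|w_2|^2\in A_p^*$, together with making the removable-singularity extension of the preceding step fully rigorous at the distributional level.
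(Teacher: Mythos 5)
Your overall strategy---pulling back via $\psi(w_1,w_2)=(w_1w_2,w_2)$, solving on the bidisc with the weight $\mu=|w_2|^2$ via Theorem \ref{solution product}, and pushing the solution forward---is exactly the paper's. However, both of your key technical claims are incorrect, and in a way that matters: you have misplaced the source of the threshold $p\ge 4$. The weight membership is not where it comes from: a power weight $|x|^a$ on $\mathbb R^2$ lies in $A_p$ if and only if $-2<a<2(p-1)$, so $|w_2|^2\in A_p^*$ precisely when $p>2$ (the paper states this explicitly before Proposition \ref{44}), not ``exactly when $p\ge 4$'' as you assert. So this step of your argument imposes no constraint beyond $p>2$.

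The genuine gap is your removable-singularity step. The claim that a complex hypersurface is ``$\bar\partial$-negligible for locally integrable forms'' is false: the defect terms $\int \mathbf g\wedge\bar\partial\chi_\varepsilon\wedge\eta$ do not vanish in the limit merely because $\mathbf g\in L^1_{\mathrm{loc}}$, since $|\nabla\chi_\varepsilon|\sim\varepsilon^{-1}$ on a tube of width $\varepsilon$ around $\{w_2=0\}$, and one needs $\int_{\triangle\times\{|w_2|<\varepsilon\}}|\mathbf g|$ to decay faster than $\varepsilon$, which $L^1_{\mathrm{loc}}$ alone does not provide. (Already in one variable, $1/w\in L^1_{\mathrm{loc}}(\mathbb C)$ is holomorphic off the origin, yet $\bar\partial(1/w)=\pi\delta_0\neq 0$.) The paper's Remark \ref{66} is a concrete counterexample to your claim in this exact setting: $\mathbf h=\frac{1}{w_2}d\bar w_1$ lies in $L^p(\triangle^2,|w_2|^2)$ for all $1<p<4$ and is $\bar\partial$-closed on $\triangle\times\triangle^*$, but pairing against a test form produces the nonzero residue $-\pi i\int_\triangle \eta(w_1,0)\,dV_{w_1}$, so it is not $\bar\partial$-closed on $\triangle^2$. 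The real content of the paper's Lemma \ref{ex1} is the quantitative cutoff estimate replacing your general principle: the error term is bounded by $k^{\frac{4}{p}-1}\|h_1\|_{L^p(\triangle\times\triangle_{1/k},\,|w_2|^2)}$, which tends to zero exactly when $p\ge 4$. This extension step---not the $A_p^*$ membership---is what forces $p\ge 4$. As written, your argument would yield the theorem for all $p>2$, which Remark \ref{66} shows is impossible by this method; the proposal therefore has a genuine hole where the paper's Lemma \ref{ex1} sits.
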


Unfortunately, our method only works in the special range $p\ge 4$. In fact, as shown in Lemma \ref{ex1}, this range   allows us to deal with a technical difficulty arisen from extending the $\bar\partial$-closed forms from the punctured bidisc to the whole bidisc, where Theorem \ref{solution product} can be applied. Remark \ref{66} further demonstrates that such a $\bar\partial$-closed extension to the bidisc  fails in general if $p<4$.   

  At the end of the paper, we construct an example (Example \ref{ex}) to demonstrate the sharpness of Theorem \ref{main}, in the sense that for any $ \epsilon>0$, there exists an $L^p$ datum which does not admit any $L^{p+\epsilon}$ solutions  on the Hartogs triangle. This non-improving phenomenon for the $\bar\partial$ regularity  on the Hartogs triangle  is essentially rooted from that for product domains. Theorem \ref{solution product} and the general framework  in \cite{YZ} can certainly be applied to  other special domains such as  proper holomorphic map images of product domains, which are left to interested readers. 

 \medskip
 
\noindent{\bf Acknowledgement:} The author thanks Professor Yifei Pan for helpful suggestions and Yuan Yuan for  discussions.

\section{Weighted $L^p$ estimates on product domains}
\subsection{Notations and Preliminaries}
We first introduce our weight space under consideration. Denote by $dV_z$ the Lebesgue integral element along the $z$ directions, and by  $|S|$  the Lebesgue measure of a subset  $S$ in $\mathbb C^n$.  For  $z=(z_1, \cdots, z_n)\in \mathbb C^n$, let $\hat z_j =(z_1, \cdots, z_{j-1}, z_{j+1}, \cdots, z_n)\in \mathbb C^{n-1}$, where the $j$-th component of $z$ is skipped. 

\begin{definition}\label{aps}
Given $1<p<\infty$, a weight $\mu: \mathbb C^n\rightarrow [0, \infty)$ is said to be in $ A^*_p$ if
$$ A_p^*(\mu): = \sup \left(\frac{1}{|D|}\int_{D}\mu(z)dV_{z_j}\right)\left(\frac{1}{|D|}\int_{D} \mu(z)^{\frac{1}{1-p}}dV_{z_j}\right)^{p-1}<\infty, $$
 where the supremum is taken over  almost every $\hat z_j\in \mathbb C^{n-1}, j=1, \ldots, n $, and   all discs $D\subset \mathbb C$. 
\end{definition}

We also recall  the standard  Muckenhoupt's class $A_p$, which consists of all weights $\mu: \mathbb C^n\rightarrow [0, \infty)$ satisfying 
 \begin{equation*}
   A_p(\mu): =  \sup \left(\frac{1}{|B|}\int_{B}\mu(z)dV_z\right)\left(\frac{1}{|B|}\int_{B} \mu(z)^{\frac{1}{1-p}}dV_z\right)^{p-1}<\infty, \end{equation*}
where the supremum is taken over all balls $B\subset \mathbb C^n$. See \cite[Chapter V]{Stein} for an introduction of the class. It is not hard to see that $A_q\subset A_p$ if $1< q<p$. Moreover, $A_p$ spaces satisfy an open-end property:  if $\mu\in A_p$ for some $1<p<\infty$, then  $\mu\in A_{\tilde p} $ for some ${\tilde p}<p$. 

Clearly,   $A^*_p = A_p$ when $n=1$. In general, $\mu\in A^*_p$ if and only if the $\delta$-dilation $\mu_\delta(z): =\mu(\delta_1z_1, \ldots, \delta_n z_n)\in A_p$  with a uniform $A_p$ constant for all $\delta =(\delta_1, \ldots, \delta_n)\in  (\mathbb R^+)^n$ (\cite[pp. 454]{GR}).   This in particular implies  $A^*_p\subset A_p$.    As will be seen in the rest of the paper,  the setting of $A^*_p$ weights allows us to apply the  slicing property  of product domains rather effectively.

\medskip

 Given a non-negative weight $\mu$ and $ 1< p<\infty $, the weighted function space $L^p(\Omega, \mu)$ is the set of  functions $f$ on $\Omega$ such that its weighted $L^p$ norm $$ \|f\|_{L^p(\Omega, \mu)}: = \left(\int_\Omega |f(z)|^p\mu(z)dV_z\right)^\frac{1}{p}<\infty. $$
When $\mu\equiv 1$, it is reduced to the (unweighted) $L^p(\Omega)$ space. From now on, we shall say $a\lesssim b$ if  $a\le Cb$ for a constant $C>0$ dependent only possibly on $\Omega, p$ and the $  A_p^* $ (or $A_p$) constant of $\mu$. We say  $a\approx b$ if and only if $a\lesssim b$ and $b\lesssim a$ at the same time.

\subsection{Weighted $L^p$ estimates for Riesz-type integrals}
We focus on a bounded product domain $\Omega = D_1\times \cdots \times D_n\subset\mathbb C^n$, where $D_j$ has $C^2$ boundary.  Fixing a multi-index $\alpha=(\alpha_1, \ldots, \alpha_n)$ with $0< \alpha_j<2, j=1, \ldots, n$ and $ 1<p<\infty$,   define the following Riesz-type integral of $f\in L^p(\Omega)$
$$ R_\alpha f(z): = \int_{\Omega} \frac{f(\zeta)}{\prod_{j=1}^n|\zeta_j-z_j|^{\alpha_j}}dV_\zeta, \ \ z\in \Omega.$$
 $R_\alpha$ is  a bounded operator from $L^p(\Omega)$ into itself by Riesz integral theory. We shall show a weighted version of this result in $L^p(\Omega, \mu)$ under the assumption that $\mu\in A_p^*$. 
 
 Firstly we consider  the $n=1$ case below, whose proof is a slight modification of  a standard trick for fractional integrals. 

\begin{pro}\label{r1}
Let $D$ be a bounded domain in $\mathbb C$. Assume $ 0<\alpha<2$ and $ \mu\in  A_p, 1<p<\infty$. Then $R_\alpha$ is a bounded operator from $L^p(D, \mu)  $ into itself. Namely, 
\begin{equation}\label{n1}
\|R_\alpha f\|_{L^p(D, \mu)}\lesssim \|f\|_{L^p(D, \mu)}.     
\end{equation}
\end{pro}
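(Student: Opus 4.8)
The plan is to reduce the whole estimate to the Hardy--Littlewood maximal function and then invoke Muckenhoupt's weighted maximal inequality. Write $M$ for the (uncentered) Hardy--Littlewood maximal operator on $\mathbb C\cong\mathbb R^2$, and extend $f$ by zero outside $D$. The crux is the pointwise domination
$$|R_\alpha f(z)|\le \int_D \frac{|f(\zeta)|}{|\zeta-z|^\alpha}\,dV_\zeta \lesssim M(f\chi_D)(z),\qquad z\in D.$$
Once this is in hand, the hypothesis $\mu\in A_p$ is precisely the Muckenhoupt condition for which the weighted maximal theorem \cite[Chapter V]{Stein} gives $\|Mg\|_{L^p(\mathbb C,\mu)}\lesssim\|g\|_{L^p(\mathbb C,\mu)}$, with constant depending only on $p$ and $A_p(\mu)$. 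Applying it to $g=f\chi_D$ yields $\|R_\alpha f\|_{L^p(D,\mu)}\lesssim\|f\|_{L^p(D,\mu)}$, which is exactly \eqref{n1}.

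To obtain the pointwise bound I would fix $z\in D$, set $R:=\mathrm{diam}(D)$, and split the region of integration into the dyadic annuli $A_k:=\{\zeta:\,2^{-k-1}R<|\zeta-z|\le 2^{-k}R\}$, $k\ge 0$; these cover $D$ since $D\subset\{|\zeta-z|\le R\}$. On $A_k$ the kernel is comparable to $(2^{-k}R)^{-\alpha}$, so
$$\int_{A_k}\frac{|f(\zeta)|}{|\zeta-z|^\alpha}\,dV_\zeta \lesssim (2^{-k}R)^{-\alpha}\int_{|\zeta-z|\le 2^{-k}R}|f\chi_D|\,dV_\zeta \lesssim (2^{-k}R)^{2-\alpha}\,M(f\chi_D)(z),$$
where I use that the area of a disc of radius $2^{-k}R$ in $\mathbb C$ is comparable to $(2^{-k}R)^2$. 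Summing in $k$ and invoking $2-\alpha>0$ (this is exactly where $\alpha<2$ enters), the geometric series $\sum_{k\ge 0}2^{-k(2-\alpha)}$ converges, producing the claimed domination with an $R$-dependent, hence $D$-dependent, constant.

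The argument is essentially routine, and the only delicate points are bookkeeping. First, truncating the annular sum to $k\ge 0$ relies on the boundedness of $D$, and the convergence of the sum genuinely uses $\alpha<2$, so the implied constant must depend on $\mathrm{diam}(D)$. Second, one should note that the hypothesis $\mu\in A_p$ here is the standard Muckenhoupt condition over discs (two-dimensional balls) in $\mathbb C$, which is precisely the condition under which $M$ is bounded on $L^p(\mathbb C,\mu)$. I do not anticipate a genuine obstacle: the entire content of the proposition is the single pointwise inequality above, after which the weighted bound follows at once from Muckenhoupt's theorem.
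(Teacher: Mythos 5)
Your proof is correct, but it takes a genuinely different and shorter route than the paper. The paper runs a Hedberg-type argument: it splits $R_\alpha f(z)$ at a variable radius $\delta$, bounds the near part by $\delta^{2-\alpha}Mf(z)$, estimates the far part by H\"older against the weight (which forces an appeal to the open-end property of $A_p$ to find $\tilde p\in\left(\tfrac{(2-\alpha)p}{2},p\right)$ with $\mu\in A_{\tilde p}$), optimizes in $\delta$, and thereby proves the stronger mapping property $\|R_\alpha f\|_{L^{q}(D,\mu)}\lesssim\|f\|_{L^p(D,\mu)}$ with $q=\frac{2p\tilde p}{2\tilde p-(2-\alpha)p}>p$, before descending back to $L^p$ via H\"older and the finiteness of $\int_D\mu$. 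You instead observe that, since $D$ is bounded and $f$ is supported there, one may in effect take $\delta=\mathrm{diam}(D)$, so the far region is empty and the whole operator is pointwise dominated: $|R_\alpha f(z)|\lesssim \mathrm{diam}(D)^{2-\alpha}M(f\chi_D)(z)$ for $z\in D$ (your annuli computation is exactly the paper's estimate \eqref{1} with $\delta=\mathrm{diam}(D)$); then Muckenhoupt's weighted maximal theorem finishes in one line. Your version is simpler, avoids the open-end property entirely, and its constant depends only on $p$, $A_p(\mu)$, $\alpha$ and $\mathrm{diam}(D)$; what the paper's longer route buys is the intermediate gain of integrability $L^p\to L^q$, $q>p$ (a weighted Sobolev-type improvement in the spirit of fractional integration), which is discarded for the statement of the proposition but does not follow from your pointwise bound alone. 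Both arguments use the boundedness of $D$ in an essential way, and both reduce to the same weighted maximal inequality, so your proof is a legitimate, self-contained alternative.
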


\begin{proof}
Without loss of generality, assume $f\ge 0$ and $f$ trivially extends to $\mathbb C$ by letting it be zero outside $D$. Denote by $Mf$ the Hardy-Littlewood maximal function of $f$. For each $z\in \mathbb C$ with $\delta>0$ to be chosen later, 
\begin{equation}\label{1}
\begin{split}
       \int_{|\zeta-z|<\delta, \zeta\in D}\frac{f(\zeta)}{|\zeta -z|^\alpha}dV_\zeta=&\sum_{k=1}^\infty \int_{\frac{\delta}{2^{k}}<|\zeta-z|<\frac{\delta}{2^{k-1}}}\frac{f(\zeta)}{|\zeta -z|^\alpha}dV_\zeta \le \sum_{k=1}^\infty \frac{2^{k\alpha}}{\delta^\alpha} \int_{|\zeta-z|<\frac{\delta}{2^{k-1}}}f(\zeta)dV_\zeta\\
       \lesssim & \sum_{k=1}^\infty 2^{-k(2-\alpha)}\delta^{2-\alpha} Mf(z)   \approx  \delta^{2-\alpha} Mf (z).
\end{split}
\end{equation}
 Due to  the open-end property of $A_p$, we can pick some $\tilde p\in \left(\frac{(2-\alpha)p}{2}, p\right)$  such that    $\mu\in A_{\tilde p}$. Then
\begin{equation}\label{2}
    \begin{split}
        \int_{|\zeta-z|>\delta, \zeta\in D}  \frac{f(\zeta)}{|\zeta -z|^\alpha}dV_\zeta \le &\left(\int_{|\zeta-z|>\delta, \zeta\in D}|f(\zeta)|^p \mu(\zeta)dV_\zeta\right)^{\frac{1}{p}}\left(\int_{|\zeta-z|>\delta, \zeta\in D}|\zeta -z|^{\frac{\alpha p}{1-p}}\mu(\zeta)^{\frac{1}{1-p}}dV_\zeta\right)^{\frac{p-1}{p}}\\
        \lesssim& \|f\|_{L^p(D, \mu)}\left(\int_{|\zeta-z|>\delta, \zeta\in D } |\zeta-z|^{\frac{\alpha p}{{\tilde p}-p}  }dV_\zeta \right)^\frac{p-{\tilde p}}{p}\left(\int_D \mu(\zeta)^{\frac{1}{1-{\tilde p}}}dV_\zeta\right)^\frac{{\tilde p}-1}{p}\\
      \lesssim  & \delta^{2-\alpha-\frac{2\tilde p}{p}}\|f\|_{L^p(D, \mu)}\left(\int_D \mu(\zeta)^{\frac{1}{1-{\tilde p}}}dV_\zeta\right)^\frac{{\tilde p}-1}{p}.
    \end{split}
\end{equation}
Combining \eqref{1} and \eqref{2}, we have
\begin{equation*}
R_\alpha f(z)\lesssim \delta^{2-\alpha} Mf(z)+ \delta^{2-\alpha-\frac{2\tilde p}{p}}\|f\|_{L^p(D, \mu)}\left(\int_D \mu(\zeta)^{\frac{1}{1-{\tilde p}}}dV_\zeta\right)^\frac{{\tilde p}-1}{p}.
    \end{equation*}

 Choosing $\delta =   \left(\frac{\|f\|_{L^p(D, \mu)}\left(\int_D \mu(\zeta)^{\frac{1}{1-{\tilde p}}}dV_\zeta\right)^\frac{{\tilde p}-1}{p} }{Mf}\right)^{\frac{p}{2\tilde p} } $ in the above, we further get
\begin{equation*}
R_\alpha f(z)\lesssim \|f\|^{\frac{(2-\alpha)p}{2\tilde p} }_{L^p(D, \mu)}\left(\int_D \mu(\zeta)^{\frac{1}{1-{\tilde p}}}dV_\zeta\right)^\frac{(2-\alpha)(\tilde p-1)}{2\tilde p} Mf(z)^{\frac{2\tilde p - (2-\alpha)p}{2\tilde p} }.
    \end{equation*}
Note that  $2\tilde p - (2-\alpha)p>0$ by the choice of $\tilde p$. Making use of the boundedness of the maximal function operator in $L^p(\mathbb C, \mu)$,
\begin{equation*}
    \begin{split}
      \|R_\alpha f\|_{_{L^{\frac{2p{\tilde p}}{2\tilde p -(2-\alpha)p}}(D, \mu)}}\lesssim & \|f\|^{\frac{(2-\alpha)p}{2\tilde p} }_{L^p(D, \mu)}\left(\int_D \mu(\zeta)^{\frac{1}{1-{\tilde p}}}dV_\zeta\right)^\frac{(2-\alpha)(\tilde p-1)}{2\tilde p} \|Mf\|_{L^p(\mathbb C, \mu)}^\frac{2\tilde p -(2-\alpha)p}{2{\tilde p}}\\
      \lesssim &  \|f\|_{L^p(D, \mu)}\left(\int_D \mu(\zeta)^{\frac{1}{1-{\tilde p}}}dV_\zeta\right)^\frac{(2-\alpha)(\tilde p-1)}{2\tilde p}.
    \end{split}
\end{equation*}
Lastly, since  $\mu\in A_{\tilde p}$, we have \begin{equation*}
    \begin{split}
        \|R_\alpha f\|_{_{L^p(D, \mu)}}^p\le &\|R_\alpha f\|^p_{_{L^{\frac{2p{\tilde p}}{2\tilde p -(2-\alpha)p}}(D, \mu)}} \left(\int_{D}\mu(\zeta)dV_\zeta\right)^\frac{(2-\alpha)p}{2\tilde p}\\
        \lesssim&  \|f\|^p_{L^p(D, \mu)}\left(\int_D \mu(\zeta)^{\frac{1}{1-{\tilde p}}}dV_\zeta\right)^\frac{(2-\alpha)(\tilde p-1)p}{2\tilde p}   \left(\int_{D}\mu(\zeta)dV_\zeta\right)^\frac{(2-\alpha)p}{2\tilde p}            \lesssim  \|f\|^p_{L^p(D, \mu)}.
    \end{split}
    \end{equation*}
\end{proof}

It is worth pointing out that $\mu\in A_p$ can not be dropped in Proposition \ref{r1}.  Indeed, letting $\triangle$ be the unit disc on $\mathbb C$, a function $f\in  L^2(\triangle, |z|^2)$ was constructed in \cite{YZ} such that     $R_1f\notin  L^2(\triangle, |z|^2)$.  Note that $|z|^2 \notin A_2$.

\begin{theorem}\label{rn}
Let $\Omega$ be a bounded product domain in $\mathbb C^n, n\ge 1$. Assume    $\alpha=(\alpha_1, \ldots, \alpha_n)$ with $0< \alpha_j<2, j=1, \ldots, n$,  and $\mu\in  A^*_p, 1<p<\infty$. Then $R_\alpha$ is a bounded operator from $L^p(\Omega, \mu)  $ into itself. Namely, 
\begin{equation*}
\|R_\alpha f\|_{L^p(D, \mu)}\lesssim \|f\|_{L^p(D, \mu)}.     
\end{equation*}
\end{theorem}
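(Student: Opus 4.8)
The plan is to exploit the tensor-product structure of the kernel and reduce everything to the one-dimensional estimate of Proposition \ref{r1}, applied one variable at a time. First I would factor $R_\alpha$ into a composition of one-dimensional Riesz-type operators. For each $j$, let $R^{(j)}$ denote the operator that sends a function $h$ to $\int_{D_j} h(\zeta_j)|\zeta_j-z_j|^{-\alpha_j}\,dV_{\zeta_j}$, acting only in the $j$-th variable with the remaining variables frozen. Since the kernel $\prod_{j=1}^n|\zeta_j-z_j|^{-\alpha_j}$ factors and each factor involves a single pair of variables, Fubini's theorem gives
$$R_\alpha = R^{(1)}\circ R^{(2)}\circ\cdots\circ R^{(n)},$$
and these operators commute because they act on disjoint sets of variables.

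The crucial observation is that the class $A^*_p$ is precisely the slicing class adapted to this factorization. By Definition \ref{aps}, for each $j$ and for almost every $\hat z_j\in\mathbb C^{n-1}$, the one-variable slice $z_j\mapsto\mu(z)$ is an $A_p$ weight on $\mathbb C$ whose $A_p$ constant is bounded by $A^*_p(\mu)$, \emph{uniformly} in $\hat z_j$. This is exactly what allows me to feed the slices into Proposition \ref{r1}.

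I would then argue by peeling off one factor at a time. Writing $R_\alpha f = R^{(1)}g$ with $g = R^{(2)}\cdots R^{(n)}f$, I slice the weighted norm in the first variable using Fubini:
$$\|R_\alpha f\|_{L^p(\Omega,\mu)}^p = \int_{D_2\times\cdots\times D_n}\left(\int_{D_1}|R^{(1)}g(z)|^p\,\mu(z)\,dV_{z_1}\right)dV_{\hat z_1}.$$
For almost every fixed $\hat z_1$, the inner integral is the weighted $L^p(D_1,\mu(\cdot,\hat z_1))$ norm of a genuine one-dimensional Riesz integral, so Proposition \ref{r1} bounds it by a constant times $\int_{D_1}|g(z)|^p\mu(z)\,dV_{z_1}$. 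Integrating over $\hat z_1$ yields $\|R_\alpha f\|_{L^p(\Omega,\mu)}\lesssim\|g\|_{L^p(\Omega,\mu)}$, and repeating the argument in the variables $z_2,\ldots,z_n$ successively gives $\|g\|_{L^p(\Omega,\mu)}\lesssim\cdots\lesssim\|f\|_{L^p(\Omega,\mu)}$ after $n$ steps, which is the claim.

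The point requiring the most care — and the step I expect to be the main obstacle — is the \emph{uniformity} of the constant produced by Proposition \ref{r1} as $\hat z_j$ ranges over $\mathbb C^{n-1}$. The slicing inequality integrates cleanly only because the implicit constant in Proposition \ref{r1} depends on the slice weight solely through its $A_p$ constant, and all slices share the common bound $A^*_p(\mu)$. I would therefore verify that the proof of Proposition \ref{r1} is genuinely quantitative in this sense: the auxiliary exponent $\tilde p$ and the quantity $A_{\tilde p}(\mu)$ furnished by the open-end property, together with the integral factors $\int_D\mu^{1/(1-\tilde p)}$ and $\int_D\mu$ absorbed at the last step, must all be controlled by the $A_p$ constant alone. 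This is the standard quantitative (reverse-H\"older) form of the open-end property; granting it, the one-variable constant is uniform across all slices and the iteration closes. The remaining matters are the routine Fubini and measurability justifications ensuring that the slices lie in the appropriate weighted spaces for almost every transverse value, which follow from finiteness of the relevant norms.
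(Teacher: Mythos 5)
Your proposal is correct and takes essentially the same route as the paper: the paper's proof of Theorem \ref{rn} is exactly your iteration, organized as an induction on $n$ in which $R_\alpha$ is factored into a one-variable Riesz integral composed with the $(n-1)$-variable operator, Proposition \ref{r1} is applied on one-dimensional slices (using that $\mu\in A^*_p$ makes every slice weight an $A_p$ weight with constant uniformly bounded by $A^*_p(\mu)$), and Fubini closes the argument. The uniformity issue you flag is precisely what the paper's $\lesssim$ convention encodes, since the constant in Proposition \ref{r1} depends only on $D$, $p$ and the $A_p$ constant of the weight.
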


\begin{proof}
$n=1$ case is due to Proposition \ref{r1}. We shall prove $n\ge 2$ cases by induction. Denote by $\alpha'$ the first $n-1$ components of $\alpha$. Similarly define $z', \zeta'$ and $ \Omega'$. Write $ \|R_\alpha f\|^p_{L^p(\Omega, \mu)} $ as
\begin{equation*}
    \begin{split}
       \int_{D_1\times \cdots D_{n-1}}\int_{D_n}\left|\int_{D_n}\frac{1}{|\zeta_n-z_n|^{\alpha_n}}\left(\int_{D_1\times \cdots D_{n-1}}\frac{f(\zeta)}{|\zeta'-z'|^{\alpha'} } dV_{\zeta'}\right)dV_{\zeta_n}\right|^p\mu(z', z_n) dV_{z_n}dV_{z'}. 
    \end{split}
\end{equation*}
For almost everywhere fixed  $z'\in \Omega'$, note that  $\mu(z', \cdot)\in A_p$ by definition. Applying \eqref{n1}  to  $\int_{D_1\times \cdots D_{n-1}}\frac{f(\zeta)}{|\zeta'-z'|^{\alpha'} } dV_{\zeta'} $ on $D_n$,  we have \begin{equation*}
\begin{split}
       & \int_{D_n}\left|\int_{D_n}\frac{1}{|\zeta_n-z_n|^{\alpha_n}}\left(\int_{D_1\times \cdots D_{n-1}}\frac{f(\zeta)}{|\zeta'-z'|^{\alpha'} } dV_{\zeta'}\right)dV_{\zeta_n}\right|^p\mu(z', z_n) dV_{z_n}\\
    \lesssim &\int_{D_n}\left|\int_{D_1\times \cdots D_{n-1}}\frac{f(\zeta', z_n)}{|\zeta'-z'|^{\alpha'} } dV_{\zeta'}\right|^p\mu(z) dV_{z}.  
    \end{split}
\end{equation*} 
Thus 
\begin{equation*}
    \begin{split}
  \|R_\alpha f\|^p_{L^p(\Omega, \mu)} \lesssim   &  \int_{D_1\times \cdots D_{n-1}\times D_n}\left|\int_{D_1\times \cdots D_{n-1}}\frac{f(\zeta', z_n)}{|\zeta'-z'|^{\alpha'} } dV_{\zeta'}\right|^p\mu(z', z_n) dV_{z}\\
  =&  \int_{D_n}\left(\int_{D_1\times \cdots \times D_{n-1}}\left|\int_{D_1\times \cdots D_{n-1}}\frac{f(\zeta', z_n)}{|\zeta'-z'|^{\alpha'} } dV_{\zeta'}\right|^p\mu(z', z_n) dV_{z'}\right)dV_{z_n}\\
   \lesssim&\cdots\\
  \lesssim &  \int_{D_1\times \cdots D_{n-1}\times D_n}|f(z)|^p\mu(z) dV_{z} = \|f\|^p_{L^p(\Omega, \mu)},
    \end{split}
    \end{equation*}
where in the omitted part, we have employed a standard induction to the term inside the parenthesis  for almost everywhere fixed $z_n\in D_n$.
 
\end{proof}

\subsection{Proof of Theorem \ref{solution product}}
The (unweighted) $\bar\partial$ theory on product domains has been thoroughly understood through the works, for instance,  \cite{NW, henkin, CS1, ChM1, ChM2, FP, JY, PZ, Zh} and the references therein. In particular, it was proved in \cite{DPZ} that   there exists  a family of  functions $e_w$ on $\Omega$ such that 
\begin{equation}\label{ca}
     T_c\mathbf f (z) = \sum_{s=1}^n \sum_{1\le i_1<\cdots<i_s\le n} \sum_{k=1}^s
\int \limits_{D_{i_1}\times \cdots\times D_{i_s}} f_{i_k}(\zeta', z'')\frac{\partial^{s-1} e_w^{k, i_1, \ldots, i_s}(\zeta)}{\partial \bar{\zeta}_{i_1}\cdots\partial \bar \zeta_{i_{k-1}}\partial \bar \zeta_{i_{k+1}}\cdots\partial \bar{\zeta}_{i_s}}
\end{equation}
 is the canonical solution to $\bar\partial u = \mathbf f (=\sum_{j=1}^n f_jd\bar z_j)$ on $\Omega$. In fact,   after the preprint \cite{DPZ} was submitted to a journal, we observed   no boundary integrals should be involved in the solution representation originally constructed in \cite[Section 5]{DPZ}, due to the vanishing property of the kernels. This leads to the above simplified expression of the canonical solution.  Moreover, formula (5.2) in \cite{DPZ} with $s=m+1$ states that there exists some constant $0<\alpha_j<2$ such that $e_w$ satisfies \begin{equation} \label{es}
 \left|\frac{\partial^{s-1} e_w^{k, i_1,\ldots, i_s}}{\partial \bar{\zeta}_{i_1}\cdots\partial \bar \zeta_{i_{k-1}}\partial \bar \zeta_{i_{k+1}}\cdots\partial \bar{\zeta}_{i_s}}\right|\lesssim \frac{1} {\prod_{r=1}^s|{\zeta}_{i_r}-w_{i_r}|^{\alpha_j}}
\end{equation} 
 on $\Omega$. The unweighted $L^p$ theory for the canonical solution operator on product domains follows immediately from \eqref{es} by Young's inequality. Recently the same observation on the vanishing of the boundary integrals was made in \cite{Y} as well (we contacted the author right away and provided with our   email record in 2021 on the observation and the completed proof to the $L^p$  estimates).  
 
 It turns out our weighted $L^p$ estimate  Theorem  \ref{solution product}  is essentially a consequence of \eqref{es} and Theorem \ref{rn}.

\begin{proof}[Proof of Theorem \ref{solution product}:]
We estimate the term  in \eqref{ca} with $ (i_1, \ldots, i_s) = (1, \ldots, s) $ and $i_k=s$. Let $\alpha= (\alpha_1, \ldots, \alpha_s)$ satisfying \eqref{es}. Denote $z'=(z_1, \ldots, z_s)$ and $z'' = (z_{s+1},\ldots, z_n)$, and similarly define $D'$ and $D''$. By Fubini theorem and \eqref{es},
 \begin{equation*}
     \begin{split}
         &\left\|\int \limits_{D_{i_1}\times \cdots\times D_{i_s}} f_{i_k}(\zeta', z'')\frac{\partial^{s-1} e_w^{k, i_1, \ldots, i_s}(\zeta)}{\partial \bar{\zeta}_{i_1}\cdots\partial \bar \zeta_{i_{k-1}}\partial \bar \zeta_{i_{k+1}}\cdots\partial \bar{\zeta}_{i_s}}\right\|^p_{L^p(\Omega, \mu)} \\
         \lesssim  &\int_{D''}\int_{D'}|R_\alpha f_s(z', z'') |^p\mu(z', z'')dV_{z'} dV_{z''}\\
         \lesssim &\int_{D''}\int_{D'}| f_s(z', z'') |^p\mu(z', z'')dV_{z'} dV_{z''} \lesssim \|\mathbf f\|^p_{L^p(\Omega, \mu)}.
     \end{split}
 \end{equation*}
Here we used Theorem \ref{rn} on $D'$ in the second inequality  for almost every fixed $z''\in D''$. The rest of the terms in the sum in $T_c\mathbf f$ are proved similarly. The proof is complete. 

\end{proof}

The following example along the line of Kerzman \cite{Ker}  demonstrates that given a weighted $L^p$ data, the $\bar\partial$ problem on product domains  in general   does not  admit weighted $L^{p+\epsilon}, \epsilon>0$ solutions.  Thus Theorem \ref{solution product} gives  the optimal weighted $L^p$ regularity  on product domains in terms of the canonical solutions. 

\begin{example}\label{ex2}
  For each  $ 1<p<\infty, \epsilon>0$ and any $r\in \left(\frac{2}{1+\epsilon}, 2\right)$, consider $\mathbf f= (z_2-1)^{-r}d\bar z_1 $ on $\triangle^2$, $\frac{1}{2}\pi <\arg (z_2-1)<\frac{3}{2}\pi$ and $\mu =|z_2-1|^{r(p-1)}$. Then $\mu\in A_p^*$, $\mathbf f\in L^{p}(\triangle^2, \mu )$  and   is  $\bar\partial$-closed on $\triangle^2$. However, there does not exist a solution $u\in L^{p+\epsilon}(\triangle^2, \mu)$ to $\bar\partial u =\mathbf f$ on $\triangle^2$.  \end{example}

\begin{proof}Clearly $\mathbf f$ is $\bar\partial$-closed on $\triangle^2$. Since $r<2$, we can also verify directly that $\mu\in A_p^*$ and  $\mathbf f\in L^{p}(\triangle^2, \mu) $.   Suppose there exists some $u\in L^{p+\epsilon}(\triangle^2, \mu)$ satisfying $\bar\partial u =\mathbf f $ on $\triangle^2$. Noting that $L^{p}(\triangle^2, \mu)\subset L^1(\triangle^2)  $, an application of Weyl's lemma  gives the existence of some holomorphic function $h$ on $\triangle^2 $, such that $u = (z_2-1)^{-r}\bar z_1+h \in L^{p+\epsilon}(\triangle^2, \mu)$. 

For almost everywhere $(r, z_2) \in U: =  (0,1) \times \triangle\subset \mathbb R^3$, consider  
    $$v(r, z_2): =\int_{|z_1|= r} {u}(z_1, z_2) dz_1. $$
   By H\"older inequality, Fubini theorem and the fact that $p>1$,  \begin{equation*}
        \begin{split}
            \|v\|^{p+\epsilon}_{L^{p+\epsilon}(U, \mu)} =&\int_{U}  \left|\int_{|z_1|= r} {u}(z_1, z_2) dz_1\right|^{p+\epsilon}\mu(z_2)dV_{z_2, r}\\
            \le&  \int_{|z_2|<1}\int_{r<1}\left|r  \int_{0}^{2\pi} |{u}(re^{i\theta}, z_2 )| d\theta  \right|^{p+\epsilon} dr\mu(z_2)dV_{z_2} \\
            \lesssim & \int_{|z_2|<1}\int_{r<1} \int_{0}^{2\pi} |{u}(re^{i\theta}, z_2 )|^{p+\epsilon}d\theta r dr \mu(z_2) dV_{z_2} \\
            = & \int_{|z_2|<1, |z_1|<1 } |{u}(z )|^{p+\epsilon}\mu(z_2)dV_{z}= \|{u}\|^{p+\epsilon}_{L^{p+\epsilon}(\triangle^2, \mu)}<\infty.
        \end{split}
    \end{equation*} 
Thus  $v\in L^{p+\epsilon}(U, \mu)$.   On the other hand, by Cauchy's theorem, for almost everywhere $(r, z_2)\in U$,
  \begin{equation*}
     v(r, z_2) =\int_{|z_1|=r}  (z_2-1)^{-r}\bar z_1dz_1 =  (z_2-1)^{-r}\int_{|z_1|=r} \frac{r^2}{ z_1}dz_1 = 2\pi r^2i  (z_2-1)^{-r},
  \end{equation*}
  which is not in  $L^{p+\epsilon}(U, \mu)$ by the choice of $r>\frac{2}{1+\epsilon}$. This is a  contradiction! The example is thus verified. 
  
\end{proof}

In the case when $\mu\equiv 1$, one can  verify similarly  the following example to conclude that  the $\bar\partial$ problem on product domains  does not improve regularity in $L^p$ space, either. Thus the canonical solutions provide  optimal  $L^p$ solutions  on product domains. Example \ref{ex3} can easily be tailored to show that the $\bar\partial$ operator  does not improve regularity in unweighted $W^{k, p}$ spaces as well. This phenomenon is consistent with that in H\"older spaces (\cite{PZ, Zh}).

\begin{example}\label{ex3}
  For each  $ 1<p<\infty$, let $\mathbf f= (z_2-1)^{-\frac{2}{{p}}}d\bar z_1 $ on $\triangle^2$, $\frac{1}{2}\pi <\arg (z_2-1)<\frac{3}{2}\pi$. Then $\mathbf f\in L^{\tilde  p}(\triangle^2 )$ for all $1<\tilde  p< p$ and   is  $\bar\partial$-closed on $\triangle^2$. However, there does not exist a solution $u\in L^{p}(\triangle^2)$ to $\bar\partial u =\mathbf f$ on $\triangle^2$.  
\end{example}

\begin{proof} The proof is similar to that of Example \ref{ex2} with $\mu\equiv 1$ instead, so we only sketch it here. Clearly $\mathbf f\in L^{\tilde p}(\triangle^2 ) $  for all $1<\tilde  p< p$. Suppose there exists some $u\in L^p(\triangle^2 )$ satisfying $\bar\partial u =\mathbf f $ on $\triangle^2$.  Then for some holomorphic function $h$ on $\triangle^2 $ we have $u = (z_2-1)^{-\frac{2}{{p}}}\bar z_1+h \in L^p(\triangle^2)$. 
Consider 
    $v(r, z_2): =\int_{|z_1|= r} {u}(z_1, z_2) dz_1$ for almost everywhere $(r, z_2) \in U: =  (0,1) \times \triangle\subset \mathbb R^3$. 
  Then     $v\in L^p(U)$. However, by Cauchy's theorem, 
 $     v(r, z_2)  = 2\pi r^2i  (z_2-1)^{-\frac{2}{{p}}}$
 almost everywhere in $U$,   which contradicts with the fact that $v\in L^p(U)$. 
 \end{proof}


\section{$L^p$ estimates on the Hartogs triangle}
Denote by  $\triangle^*: =\triangle\setminus \{0\}$,   the punctured disc on $\mathbb C$. Then   $\psi:  \triangle\times \triangle^* \rightarrow \mathbb H$ given by $$(w_1, w_2)\mapsto (z_1, z_2)= \psi(w)= (w_1w_2, w_2)$$ is a biholomorphism, with its inverse $\phi:  \mathbb H \rightarrow \triangle\times \triangle^*$ given by $$(z_1, z_2)\mapsto (w_1, w_2) = \phi(z) = \left(\frac{z_1}{z_2}, z_2\right).$$ This biholomorphism allows us to pull back and push forward  between $\mathbb H$ and $\triangle\times\triangle^*$. Due to the explicit and simple form of   $\psi$, we shall be self-contained and chase concretely how the  singularity affects the $\bar\partial$-closedness, the pull-back of the data and push-forward of (solution) functions. The general framework can be founded in \cite{YZ}.
In fact,  for any $\mathbf f = \sum_{j=1}^2 f_j(z)d\bar z_j\in L^p(\mathbb H) $,  making use of change of variables formula we have the pull-back \begin{equation}\label{55}
    \psi^* \mathbf f = f_1\circ \psi \cdot \bar w_2d\bar w_1 + \left(f_1\circ \psi \cdot\bar w_1 +f_2\circ \psi \right) d\bar w_2\in L^p(\triangle^2, |w_2|^2) 
\end{equation} with
\begin{equation}\label{pull}
     \|\psi^*\mathbf f\|^p_{L^p(\triangle^2, |w_2|^2) }\lesssim \sum_{j=1}^2 \int_{\triangle^2} |f_j\circ \psi(w)|^p|w_2|^2 dV_w =\sum_{j=1}^2 \int_{\mathbb H} |f_j(z)|^pdV_z = \|\mathbf f\|^p_{L^p(\mathbb H) }. \end{equation}
The inverse $\phi$ is used to push forward any function   $\tilde u\in L^p(\triangle^2, |w_2|^2  )$ to be in $ L^p(\mathbb H)$ with
\begin{equation}\label{push}
    \| \tilde u\circ \phi \|^p_{ L^p(\mathbb H)}= \int_{\mathbb H}  |\tilde u\circ \phi(z)|^pdV_z  = \int_{\triangle^2} |\tilde u(w)|^p|w_2|^2dV_w =\| \tilde  u\|^p_{L^{p}(\triangle^2, |w_2|^2) }.
\end{equation}

\medskip

Note that  $  |w_2|^2\in A^*_p$, $p>2$. In order to apply the weighted $L^p$ estimates in Theorem \ref{solution product} (where each portion $D_j$ are assumed to have $C^2$ boundary), we need to justify that the pull-back data is   $\bar\partial$-closed on $\triangle^2$.

\begin{pro}\label{44}
Let $\mathbf f\in L^p(\mathbb H)$ be a $\bar\partial$-closed $(0,1)$ form on $\mathbb H$. If $4\le p<\infty$, then $\psi^*\mathbf f  $ lies in $L^p(\triangle^2, |w_2|^2  )  $ and is  a $\bar\partial$-closed $(0,1)$ form on $\triangle^2$.
\end{pro}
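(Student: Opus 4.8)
The statement has two parts, and only the second carries the real content. The membership $\psi^*\mathbf f\in L^p(\triangle^2,|w_2|^2)$ is already recorded in \eqref{55}--\eqref{pull}, so what remains is to show that the trivially extended form is $\bar\partial$-closed across the singular line $\{w_2=0\}$. Since $\psi:\triangle\times\triangle^*\to\mathbb H$ is a biholomorphism and pullback by a holomorphic map commutes with $\bar\partial$, I would first observe that $\psi^*\mathbf f$ is weakly $\bar\partial$-closed on the punctured bidisc $\triangle\times\triangle^*$: for $\eta\in C_c^\infty(\triangle\times\triangle^*)$ the transported test function $\eta\circ\phi$ (together with the division by $z_2$ coming from the Jacobian) is admissible on $\mathbb H$ precisely because supports stay away from $\{w_2=0\}$, which reduces the weak closedness on $\triangle\times\triangle^*$ to that of $\mathbf f$ on $\mathbb H$. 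The whole problem is thereby recast as a removable-singularity question along $\{w_2=0\}$.

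Write $\psi^*\mathbf f = g_1\,d\bar w_1 + g_2\,d\bar w_2$ with $g_1 = (f_1\circ\psi)\bar w_2$ and $g_2 = (f_1\circ\psi)\bar w_1 + f_2\circ\psi$, so that weak closedness on the full bidisc means $\int_{\triangle^2} g_1\,\partial_{\bar w_2}\varphi\,dV_w = \int_{\triangle^2} g_2\,\partial_{\bar w_1}\varphi\,dV_w$ for every $\varphi\in C_c^\infty(\triangle^2)$. The plan is a cutoff argument: fix $\chi\in C^\infty(\mathbb R)$ with $\chi\equiv0$ on $(-\infty,1]$ and $\chi\equiv1$ on $[2,\infty)$, set $\chi_\epsilon(w):=\chi(|w_2|/\epsilon)$, and apply the identity already available on $\triangle\times\triangle^*$ to the admissible test function $\chi_\epsilon\varphi$. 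Expanding by the product rule and using that $\chi_\epsilon$ depends on $w_2$ only (so $\partial_{\bar w_1}\chi_\epsilon=0$ and no error arises from $g_2$), every contribution is accounted for except the single error term
$$\int_{\triangle^2} g_1\,(\partial_{\bar w_2}\chi_\epsilon)\,\varphi\,dV_w.$$
The asserted identity on $\triangle^2$ then follows once this error term tends to $0$ as $\epsilon\to0$ while the remaining main terms converge to their limits.

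Convergence of the main terms is routine dominated convergence once $g_1,g_2\in L^1_{\mathrm{loc}}(\triangle^2)$, which I would get from Hölder's inequality applied to $|f_j\circ\psi|=(|f_j\circ\psi|\,|w_2|^{2/p})\,|w_2|^{-2/p}$ against the locally integrable power $|w_2|^{-2/(p-1)}$, combined with the weighted bound $\int|f_j\circ\psi|^p|w_2|^2\,dV_w<\infty$ of \eqref{pull}. The heart of the matter is the error term, and here the factor $\bar w_2$ in $g_1$ — forced by the Jacobian of $\psi$ — is decisive: since $|\partial_{\bar w_2}\chi_\epsilon|\lesssim\epsilon^{-1}$ is supported in the thin shell $\{\epsilon\le|w_2|\le2\epsilon\}$, and there $|g_1|=|f_1\circ\psi|\,|w_2|\lesssim\epsilon\,|f_1\circ\psi|$, the factor $\epsilon^{-1}$ is absorbed and the error is dominated by $\int_{\{\epsilon\le|w_2|\le2\epsilon\}}|f_1\circ\psi|\,|\varphi|\,dV_w$, which vanishes by absolute continuity of the integral as the shell shrinks, provided $f_1\circ\psi$ is integrable near $\{w_2=0\}$. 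I expect this integrability of $f_1\circ\psi$ to be the main obstacle and the one place where the range of $p$ is consumed: it is what the hypothesis on $p$ secures and, as Remark \ref{66} indicates, the mechanism that genuinely degenerates for small $p$. Combining the two limits yields $\int g_1\,\partial_{\bar w_2}\varphi = \int g_2\,\partial_{\bar w_1}\varphi$ for all $\varphi\in C_c^\infty(\triangle^2)$, i.e. $\psi^*\mathbf f$ is weakly $\bar\partial$-closed on all of $\triangle^2$.
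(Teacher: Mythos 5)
Your proposal is correct, but at the decisive step it takes a genuinely different route from the paper. The paper, like you, handles membership via \eqref{pull} and proves weak $\bar\partial$-closedness on $\triangle\times\triangle^*$ by transporting test functions through the biholomorphism (using that $\eta\circ\phi(z)/z_2$ is an admissible test function on $\mathbb H$). For the extension across $\{w_2=0\}$, however, it invokes Lemma \ref{ex1}, a removable-singularity statement for \emph{arbitrary} $(0,1)$ forms $\mathbf h\in L^p(\triangle^2,|w_2|^2)$ that are $\bar\partial$-closed on the punctured bidisc; there the cutoff error is bounded by $k^{\frac{4}{p}-1}\|h_1\|_{L^p(\triangle\times\triangle_{1/k},|w_2|^2)}$, which is exactly where $p\ge 4$ is consumed, and Remark \ref{66} shows that threshold is sharp in that generality. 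You instead run the cutoff directly on the pullback form and exploit its special structure: the $d\bar w_1$-coefficient is $g_1=(f_1\circ\psi)\,\bar w_2$, and the explicit Jacobian factor $\bar w_2$ absorbs the factor $\epsilon^{-1}$ coming from $\partial_{\bar w_2}\chi_\epsilon$ on the shell $\{\epsilon\le|w_2|\le 2\epsilon\}$, leaving only
\[
\int_{\triangle\times\{\epsilon\le|w_2|\le 2\epsilon\}}|f_1\circ\psi|\,|\varphi|\,dV_w ,
\]
which vanishes by absolute continuity once $f_1\circ\psi\in L^1_{\mathrm{loc}}$. One correction to your own accounting: this integrability is \emph{not} where the hypothesis $p\ge4$ gets used. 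Your Hölder estimate gives $f_j\circ\psi\in L^1(\triangle^2)$ as soon as $\int_\triangle |w_2|^{-2/(p-1)}dV_{w_2}<\infty$, i.e.\ for every $p>2$; so your argument never actually needs $p\ge 4$ and in fact proves the proposition for all $p>2$. This does not contradict Remark \ref{66}: the form $\frac{1}{w_2}d\bar w_1$ there equals $\psi^*\mathbf f$ for $f_1=|z_2|^{-2}$, $f_2=-\bar z_1/(\bar z_2|z_2|^2)$, and that $\mathbf f$ belongs to $L^p(\mathbb H)$ only when $p<2$, so it never arises from your structured setting. In short, the paper's route buys a general lemma of independent interest whose $p\ge4$ threshold is sharp for arbitrary weighted forms, while your tailored argument buys a stronger conclusion for pullbacks: Proposition \ref{44}, and with it the solvability mechanism behind Theorem \ref{main}, on the larger range $p>2$.
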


The proof of  the proposition boils down to   showing the following Harvey-Polking type extension (or resolution) of $\bar\partial$-closed $ L^p(\triangle^2, |w_2|^2)$  forms  from $\triangle\times \triangle^*$ to $\triangle^2$,   $p\ge 4$. We remark that if the forms lie in $ W^{1,p}(\triangle, |w_2|^2) $ in addition, then this range of $p$ can be relaxed to $p>2$. See, for instance, \cite[Proposition 5.10]{YZ}.

\begin{lem}\label{ex1}
Suppose a $(0, 1)$ form $\mathbf h\in L^p(\triangle^2, |w_2|^2)$ is $\bar\partial$-closed  on $\triangle\times \triangle^* $. If $4\le p<\infty$, then $\mathbf h$  is $\bar\partial$-closed on $\triangle^2$.
\end{lem}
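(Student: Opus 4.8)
The plan is to interpret $\bar\partial$-closedness in the distributional (current) sense and to show that the thin set $\{w_2=0\}$ carries no singular contribution. Recall that a $(0,1)$ form $\mathbf h$ is $\bar\partial$-closed on a domain $U\subset\mathbb C^2$ precisely when $\int_U\mathbf h\wedge\bar\partial\psi=0$ for every smooth $(2,0)$ form $\psi$ compactly supported in $U$. By hypothesis this identity holds for all $\psi$ supported in $U=\triangle\times\triangle^*$, and the goal is to promote it to all test forms $\psi$ compactly supported in $\triangle^2$. First I would verify that $\mathbf h\in L^1_{loc}(\triangle^2)$ with respect to ordinary Lebesgue measure, so that the pairing $\int_{\triangle^2}\mathbf h\wedge\bar\partial\psi$ is absolutely convergent: writing $|\mathbf h|=\big(|\mathbf h|\,|w_2|^{2/p}\big)\cdot|w_2|^{-2/p}$ and applying H\"older with exponents $p$ and $\tfrac{p}{p-1}$, local integrability reduces to $\int_K|w_2|^{-2/(p-1)}\,dV<\infty$, which holds since $p\ge 4>2$.

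The core of the argument is a cutoff-and-limit procedure. Fix a test $(2,0)$ form $\psi$ on $\triangle^2$ and a smooth $\chi$ on $[0,\infty)$ with $\chi\equiv 0$ on $[0,1]$ and $\chi\equiv 1$ on $[2,\infty)$; set $\chi_\epsilon(w):=\chi(|w_2|/\epsilon)$, so that $\chi_\epsilon\equiv 0$ near $\{w_2=0\}$, the product $\chi_\epsilon\psi$ is compactly supported in $\triangle\times\triangle^*$, and $|\bar\partial\chi_\epsilon|\lesssim\epsilon^{-1}$ is supported in the shell $A_\epsilon:=\{\epsilon\le|w_2|\le 2\epsilon\}\cap\operatorname{supp}\psi$. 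Feeding the admissible test form $\chi_\epsilon\psi$ into the punctured-bidisc hypothesis and expanding $\bar\partial(\chi_\epsilon\psi)=\bar\partial\chi_\epsilon\wedge\psi+\chi_\epsilon\,\bar\partial\psi$, I obtain
$$\int_{\triangle^2}\mathbf h\wedge\chi_\epsilon\,\bar\partial\psi=-\int_{\triangle^2}\mathbf h\wedge\bar\partial\chi_\epsilon\wedge\psi.$$
As $\epsilon\to 0$ the left-hand side tends to $\int_{\triangle^2}\mathbf h\wedge\bar\partial\psi$ by dominated convergence, using $\chi_\epsilon\to 1$ pointwise off $\{w_2=0\}$ together with the local integrability just established. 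It then suffices to show the right-hand side vanishes in the limit.

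The decisive estimate, and the only place where $p\ge 4$ enters, is the control of this error term. Bounding $|\bar\partial\chi_\epsilon|\lesssim\epsilon^{-1}$ and applying H\"older with the weight $|w_2|^2$ gives
$$\left|\int_{\triangle^2}\mathbf h\wedge\bar\partial\chi_\epsilon\wedge\psi\right|\lesssim\frac1\epsilon\,\|\mathbf h\|_{L^p(A_\epsilon,\,|w_2|^2)}\left(\int_{A_\epsilon}|w_2|^{-\frac{2}{p-1}}\,dV\right)^{\frac{p-1}{p}}.$$
Since $|w_2|\approx\epsilon$ and $|A_\epsilon|\approx\epsilon^2$ on the shell, the last factor is $\approx\epsilon^{(2p-4)/p}$, so the whole expression is $\lesssim\epsilon^{(p-4)/p}\,\|\mathbf h\|_{L^p(A_\epsilon,\,|w_2|^2)}$. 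For $p\ge 4$ the power $\epsilon^{(p-4)/p}$ stays bounded, while $\|\mathbf h\|_{L^p(A_\epsilon,\,|w_2|^2)}\to 0$ by absolute continuity of the integral of $|\mathbf h|^p|w_2|^2\in L^1(\triangle^2)$ over the shrinking shells $A_\epsilon$; hence the error vanishes. Passing to the limit yields $\int_{\triangle^2}\mathbf h\wedge\bar\partial\psi=0$ for every test form $\psi$, i.e. $\mathbf h$ is $\bar\partial$-closed on $\triangle^2$.

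The main obstacle is precisely this scaling balance: the cutoff derivative contributes a factor $\epsilon^{-1}$, while the weighted volume of the shell contributes $\epsilon^{(2p-4)/p}$, and their product $\epsilon^{(p-4)/p}$ is harmless only when $p\ge 4$. For $p<4$ the exponent becomes negative, the error term need not decay, and one expects the $\bar\partial$-closed extension to fail in general, consistent with the restriction in the statement.
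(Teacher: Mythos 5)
Your proof is correct and follows essentially the same route as the paper's: both pass to the distributional formulation, test against a radial cutoff of the singular set $\{w_2=0\}$, use the $\bar\partial$-closedness on $\triangle\times\triangle^*$ for the part supported away from $\{w_2=0\}$, and control the cutoff-derivative error by H\"older against the weight $|w_2|^2$, where the scaling $\epsilon^{-1}\cdot\epsilon^{(2p-4)/p}=\epsilon^{(p-4)/p}$ is exactly the source of the restriction $p\ge 4$. The only cosmetic differences are that your cutoff vanishes near $\{w_2=0\}$ while the paper's equals $1$ there, and that your shell-supported gradient makes the borderline case $p=4$ explicit via absolute continuity of the integral, a point the paper uses implicitly in letting $k\to\infty$.
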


\begin{proof}
Write $\mathbf h(w) = h_1(w)d\bar w_1 +h_2(w)d\bar w_2$. Let $\eta = \eta(w) dw_1\wedge dw_2$ be a smooth $(2, 0)$-form in $\triangle^2 $ with compact support. We shall show 
$$-\left\langle \bar\partial h,  \eta \right\rangle_{\triangle^2}: =\int_{\triangle^2}  h_1(w)\frac{\partial \eta(w)}{\partial \bar w_2} - h_2(w)\frac{\partial \eta(w)}{\partial \bar w_1}   dV_w =0. $$
 
 Denote by $\triangle_r$ the disc centered at $0$ with radius $r>0$. Choose a cut-off function $\chi\in C_c^\infty(\triangle)$ such that $\chi =1$ on $\triangle_{\frac{1}{2}}$ and $ |\nabla \chi|<\frac{1}{3}$ on $\triangle$. Letting $\chi_k(w_2) =\chi(kw_2)$ on $\triangle$, then $\chi_k$ is supported on $ \triangle_{\frac{1}{k}}$ and $|\nabla \chi_k|\lesssim k$. Consequently,
\begin{equation*}
    \begin{split}
    & \left|\int_{\triangle^2} h_2(w)\frac{\partial \eta(w)}{\partial \bar w_1}  dV_w - \int_{\triangle^2}  h_1(w)\frac{\partial \left((1-\chi_k(w_2))\eta(w)\right)}{\partial \bar w_2} dV_w\right|\\
    \le & \int_{\triangle^2}  \left|h_2(w)\frac{\partial \left(\chi_k(w_2)\eta(w)\right)}{\partial \bar w_1}\right| dV_w  \\
    &+ \left| \int_{\triangle^2}  h_2(w)\frac{\partial \left((1-\chi_k(w_2))\eta(w)\right)}{\partial \bar w_1} -  h_1(w)\frac{\partial \left((1-\chi_k(w_2))\eta(w)\right)}{\partial \bar w_2}dV_w\right|.     \end{split}
\end{equation*}
Since $(1-\chi_k(w_2))\eta(w)$ has compact support on $\triangle\times \triangle^*$, the last line in the above is zero by the $\bar\partial$-closedness of $\mathbf h$ on $\triangle\times \triangle^*$. For the first term, since $p>2$,
\begin{equation}\label{hh}
    \begin{split}
    \int_{\triangle^2}  \left|h_2(w)\frac{\partial \left(\chi_k(w_2)\eta(w)\right)}{\partial \bar w_1}\right| dV_w \lesssim &   \int_{\triangle\times \triangle_{\frac{1}{k}} } | h_2(w)||w_2|^{\frac{2}{p}} |w_2|^{-\frac{2}{p}}| dV_w  \\
     \lesssim & \|h_2\|_{ L^p(\triangle^2, |w_2|^2)}\left(\int_{\triangle_{\frac{1}{k}} } |w_2|^{-\frac{2}{p -1}}dV_{w_2}\right)^{\frac{ p-1}{p}} \rightarrow 0
    \end{split}
\end{equation}
as $k\rightarrow \infty$. Hence as $k\rightarrow \infty$,
\begin{equation}\label{11}
   \left|\int_{\triangle^2} h_2(w)\frac{\partial \eta(w)}{\partial \bar w_1}  dV_w - \int_{\triangle^2}  h_1(w)\frac{\partial \left((1-\chi_k(w_2))\eta(w)\right)}{\partial \bar w_2} dV_w\right|\rightarrow 0. 
\end{equation}

On the other hand, 
\begin{equation*}
    \begin{split}
        \left| \int_{\triangle^2}  h_1(w)\frac{\partial \left(\chi_k(w_2)\eta(w)\right)}{\partial \bar w_2} dV_w  \right|         \lesssim \int_{\triangle\times \triangle_{\frac{1}{k}}}  \left|  h_1(w) \frac{\partial \left(\chi_k(w_2)\right)}{\partial \bar w_2} \right|dV_w   +    \int_{\triangle\times \triangle_{\frac{1}{k}}}  |h_1(w)|  dV_w.
    \end{split}
\end{equation*}
By the same reasoning as  in \eqref{hh}, the last term goes to $0$ as $k\rightarrow \infty$. For the first term in the right hand side of the last line, making use of the fact that $\left| \frac{\partial\left(\chi_k(w_2)\right)}{\partial \bar w_2}\right|\lesssim k $ on $ \triangle_{\frac{1}{k}}$, we get 
\begin{equation*}
    \begin{split}
        \left| \int_{\triangle\times \triangle_{\frac{1}{k}}}  h_1(w) \frac{\partial \left(\chi_k(w_2)\right)}{\partial \bar w_2} dV_w  \right|\lesssim & k \int_{\triangle\times \triangle_{\frac{1}{k}}}|h_1(w)||w_2|^{\frac{2}{p}} |w_2|^{-\frac{2}{p}}|dV_w\\
        \lesssim & k \|h_1\|_{ L^p(\triangle\times \triangle_{\frac{1}{k}}, |w_2|^2)}\left(\int_{\triangle_{\frac{1}{k}} } |w_2|^{-\frac{2}{p -1}}dV_{w_2}\right)^{\frac{ p-1}{p}} \\
        \lesssim& k^{\frac{4}{p}-1}\|h_1\|_{ L^p(\triangle\times \triangle_{\frac{1}{k}}, |w_2|^2)}.
    \end{split}
\end{equation*}
Since $p\ge 4$, as $k\rightarrow \infty$  the last term goes to zero, and thus
\begin{equation}\label{22}
    \left| \int_{\triangle^2}  h_1(w)\frac{\partial \left(\chi_k(w_2)\eta(w)\right)}{\partial \bar w_2} dV_w  \right|    \rightarrow 0. 
\end{equation}The proof of the proposition is complete by combining \eqref{11} and \eqref{22}. 

\end{proof}

\begin{remark}\label{66}
The  $p\ge 4$ assumption in Lemma \ref{ex1} can not be relaxed. For instance,  $\mathbf h(w) = \frac{1}{w_2}d\bar w_1$ is $\bar\partial$-closed  on $\triangle\times \triangle^* $ and lies in $L^p(\triangle^2, |w_2|^2  )$ for all  $1<p<4$.  However,  $\mathbf h(w)$ is not $\bar\partial$-closed  on $\triangle^2 $. In fact, given  a smooth $(2, 0)$-form $\eta = \eta(w) dw_1\wedge dw_2$ in $\triangle^2 $ with compact support,   by Stokes' theorem and Residue theorem, 
\begin{equation*}\begin{split}
 - \langle \bar\partial \mathbf h, \eta\rangle_{\triangle^2}  = &  \int_{\triangle^2}\frac{1}{w_2}\frac{\partial \eta(w)}{\partial \bar w_2} dV_w = \int_\triangle\lim_{\epsilon\rightarrow 0}  \int_{\triangle\setminus \triangle_\epsilon}\frac{1}{w_2}\frac{\partial \eta(w)}{\partial \bar w_2} dV_{w_2}dV_{w_1} \\
    = &\frac{1}{2}\int_\triangle\lim_{\epsilon\rightarrow 0}  \int_{\triangle\setminus \triangle_\epsilon}\bar\partial\left( \frac{ \eta(w)}{w_2}dw_2\right)dV_{w_1}  = -\frac{1}{2}\int_\triangle\lim_{\epsilon\rightarrow 0}  \int_{\partial \triangle_\epsilon} \frac{ \eta(w)}{w_2} dw_2dV_{w_1}\\
    =& -\pi i\int_\triangle\eta(w_1, 0)dV_{w_1},
     \end{split}
   \end{equation*}
   which is not zero in general. 
\end{remark} 

\medskip

\begin{proof}[Proof of Proposition \ref{44}: ]
In view of Lemma \ref{ex1} and \eqref{pull}, we only need to verify that    $\psi^*\mathbf f $ is $\bar\partial$-closed on $ \triangle\times \triangle^*$. Indeed,   for any smooth function $\chi$ with compact support on $\mathbb H$, by   $\bar\partial$-closedness of $\mathbf f$  on $\mathbb H$, we have 
\begin{equation}\label{33}
    \begin{split}
      \int_{\mathbb H} f_1(z) \frac{\partial \chi}{\partial \bar z_2}(z) -f_2(z)\frac{\partial \chi}{\partial \bar z_1}(z) dV_z =0.
    \end{split}
\end{equation}
For any $(2,0)$ form $\eta$ with compact support on $\triangle\times \triangle^*,$    by \eqref{55}, chain rule and change of variables formula,
\begin{equation*}
    \begin{split}
       - \langle \bar\partial\psi^*\mathbf f, \eta\rangle_{\triangle^2} = & \int_{\triangle^2}  f_1\circ \psi(w)\bar w_2 \frac{\partial \eta(w)}{\partial \bar w_2} - \left(f_1\circ \psi(w) \bar w_1 +f_2\circ \psi(w)\right)\frac{\partial \eta(w)}{\partial \bar w_1}   dV_w\\
        =&\int_{\mathbb H} \left(  f_1(z)\bar z_2\left(\frac{\partial \eta\circ \phi(z)}{\partial \bar z_1}\frac{\bar z_1}{\bar z_2}+ \frac{\partial \eta\circ \phi(z)}{\partial \bar z_2} \right)  - \left(f_1(z) \frac{\bar z_1}{\bar z_2 } +f_2(z)\right)\frac{\partial \eta\circ \phi(z)}{\partial \bar z_1} \bar z_2\right) \frac{1}{|z_2|^2} dV_z \\
        =& \int_{\mathbb H}   f_1(z) \frac{\partial }{\partial \bar z_2} \left(\frac{\eta\circ \phi(z)}{z_2}\right)   -  f_2(z) \frac{\partial }{\partial \bar z_1}  \left( \frac{\eta\circ \phi(z)}{z_2}\right) dV_z =0,
    \end{split}
\end{equation*}
where the last line is due to \eqref{33} and the fact that $ \frac{\eta\circ \phi(z)}{z_2} $ is smooth with compact support on $\mathbb H$. 

\end{proof}

\begin{proof}[Proof of Theorem \ref{main}:] Given a $\bar\partial$-closed $(0, 1)$ form $\mathbf f\in L^p(\mathbb H)$,  $\psi^*\mathbf f\in L^p(\triangle^2, |w_2|^2)$  and  is $\bar\partial$-closed on $\triangle^2$ by Proposition \ref{44}. As $|w_2|^2\in A_p^*$, an application of Theorem \ref{solution product} gives a solution $\tilde u\in L^p(\triangle^2, |w_2|^2)$ to $\bar\partial \tilde u = \psi^*\mathbf f$ on $\triangle^2 $.  Namely, for any smooth and compactly supported $(2,1)$ form $\eta = (\eta_1 d\bar w_1 +\eta_2 d\bar w_2)\wedge dw_1\wedge dw_2 $ on $\triangle^2$,
\begin{equation}\label{221}
\begin{split}
   &\int_{\triangle^2}  \tilde u(w)\left(   \frac{\partial\eta_1(w)}{\partial \bar w_2} -\frac{\partial\eta_2(w)}{\partial \bar w_1} \right)dV_w  =  -\langle \bar\partial \tilde u, \eta  \rangle_{\triangle^2} = -\langle \psi^* \mathbf f, \eta  \rangle_{\triangle^2}  \\
    =& \int_{\triangle^2} f_1\circ \psi(w) \bar w_2\eta_2(w) - \left(f_1\circ \psi(w) \bar w_1 +f_2\circ \psi(w)\right) \eta_1(w) dV_w.
    \end{split}
\end{equation}

We now verify that  $u: =\tilde u\circ \phi$ solves $\bar\partial u =\mathbf f$ on $\mathbb H$. Indeed, for any smooth $(2, 1)$ form $\chi = (\chi_1 d\bar z_1 +\chi_2 d\bar z_2)\wedge dz_1\wedge dz_2$ with compact support on $\mathbb H$,     by chain rule and change of variables,
\begin{equation*}
\begin{split}
       - \langle \bar\partial u, \chi\rangle_{\mathbb H} =& \int_{\mathbb H} \tilde u\circ \phi(z)\left(   \frac{\partial\chi_1(z)}{\partial \bar z_2} -\frac{\partial\chi_2(z)}{\partial \bar z_1} \right)dV_z   \\
    =& \int_{\triangle^2} \tilde u(w)\left(- \frac{\partial\chi_1\circ \psi(w)}{\partial \bar w_1}\frac{\bar w_1}{\bar w_2} + \frac{\partial\chi_1\circ \psi(w)}{\partial \bar w_2}- \frac{\partial\chi_2\circ \psi(w)}{\partial \bar w_1} \frac{1}{\bar w_2}  \right)|w_2|^2dV_w   \\
    =& \int_{\triangle^2} \tilde u(w)\left(- \frac{\partial\chi_1\circ \psi(w)}{\partial \bar w_1} \bar w_1 w_2 + \frac{\partial\chi_1\circ \psi(w)}{\partial \bar w_2} |w_2|^2- \frac{\partial\chi_2\circ \psi(w)}{\partial \bar w_1}  w_2  \right)dV_w \\
    =&\int_{\triangle^2} \tilde u(w)\left( \frac{\partial  }{\partial \bar w_2 }\left(  \chi_1\circ\psi(w)|w_2|^2\right)     - \frac{\partial  }{\partial \bar w_1 }\left( \chi_2\circ \psi(w)  w_2+\chi_1\circ\psi(w)  \bar w_1  w_2  \right)\right)dV_w. 
    \end{split}
\end{equation*} 
Note that $\eta_1(w): = \chi_1\circ\psi(w)|w_2|^2, \eta_2(w): =   \chi_2\circ \psi(w)  w_2+\chi_1\circ\psi(w)  \bar w_1  w_2    $  are both smooth with compact supports on $\triangle\times \triangle^*$ and thus on $\triangle^2$. Making use of \eqref{221}, we further simplify the above to get
\begin{equation*}
    \begin{split}
   -\langle \bar\partial u, \chi\rangle_{\mathbb H}   =& \int_{\triangle^2} \left(f_1\circ \psi(w)\chi_2\circ \psi(w) - f_2\circ \psi(w)\chi_1\circ \psi(w)\right)|w_2|^2 dV_w\\
    =&\int_{\mathbb H} f_1(z)\chi_2(z) - f_2(z)\chi_1(z) dV_z =-\langle \mathbf f, \chi\rangle_{\mathbb H}. 
    \end{split}
\end{equation*} 

Altogether, by \eqref{pull}-\eqref{push} and Theorem \ref{solution product},  the solution $u =\tilde u\circ \phi$ satisfies $$\|u\|_{ L^p(\mathbb H)} = \|\tilde  u\|_{L^{p}(\triangle^2, |w_2|^2) } \lesssim \| \psi^*\mathbf f \|_{ L^{p}(\triangle^2, |w_2|^2)}\lesssim \|\mathbf f\|_{L^p(\mathbb H) }. $$
  The proof is complete. 
  
\end{proof}

The following example shows Theorem \ref{main} is optimal, in the sense that solutions can not lie in a better Lebesgue space than that of the data in general. 

\begin{example}\label{ex}
 For each  $ 1<p<\infty$, let $\mathbf f= z_2(z_2-1)^{-\frac{2}{{p}}}\left(\frac{1}{\bar z_2}d\bar z_1 - \frac{\bar z_1}{\bar z^2_2}d\bar z_2\right) $ on $\mathbb H$, $\frac{1}{2}\pi <\arg (z_2-1)<\frac{3}{2}\pi$. Then $\mathbf f\in L^{\tilde  p}(\mathbb H)$ for all $1<\tilde  p< p$ and   is  $\bar\partial$-closed on $\mathbb H$. However, there does not exist a solution $u\in L^{p}(\mathbb H)$ to $\bar\partial u =\mathbf f$ on $\mathbb H$.  
 \end{example}

 \begin{proof}
 Since $|z_1|<|z_2|$ on $\mathbb H$, we have $\mathbf f\in L^{\tilde  p}(\mathbb H) $ for all $1<\tilde  p< p$. A direct computation shows that $\mathbf f$ is $\bar\partial$-closed on $\mathbb H$. Moreover, $ \psi^* \mathbf f= w_2(w_2-1)^{-\frac{2}{{p}}}d\bar w_1 $ and is $\bar\partial$-closed  on $\triangle^2$. 

Arguing by contradiction, suppose there exists some $u\in L^p(\mathbb H )$ satisfying $\bar\partial u =\mathbf f $ on $\mathbb H$. Then $\tilde u: = u\circ \psi $ lies in $ L^p(\triangle^2, |w_2|^2)$  and satisfies $\bar\partial \tilde u = \psi^*\mathbf f = \bar\partial \left(w_2(w_2-1)^{-\frac{2}{{p}}}\bar w_1\right)$ on $\triangle\times\triangle^*$. In particular,   $\tilde u\in L^p(\triangle\times (\triangle\setminus \triangle_{\frac{1}{2}}))$ and there exists some holomorphic function $h$ on $\triangle\times(\triangle \setminus \triangle_{\frac{1}{2}})$ such that $\tilde u = w_2(w_2-1)^{-\frac{2}{{p}}}\bar w_1+h \in L^p(\triangle\times (\triangle\setminus \triangle_{\frac{1}{2}}))$. 

For almost everywhere fixed $(r, w_2) \in U: =  (0,1)\times ( \triangle\setminus  \triangle_{\frac{1}{2}})\subset \mathbb R^3$, consider  
    $$v(r, w_2): =\int_{|w_1|= r} {\tilde u}(w_1, w_2) dw_1. $$
Similarly as in the proof of Example \ref{ex2} (with $\mu=1$), we see that $v\in L^p(U)$. 
Note that   $h(\cdot, w_2)$ is holomorphic on $\triangle$ for each fixed $w_2\in \triangle\setminus \triangle_\frac{1}{2}$. Thus  for almost everywhere fixed $(r, w_2)\in U$,  Cauchy's theorem gives
  \begin{equation*}
     v(r, w_2) =\int_{|w_1|=r} w_2(w_2-1)^{-\frac{2}{{p}}}\bar w_1dw_1  = 2\pi r^2i w_2(w_2-1)^{-\frac{2}{{p}}},
  \end{equation*}
  which does not belong to  $L^p(U)$. Contradiction!
  
 \end{proof}
 

\bibliographystyle{alphaspecial}

\fontsize{11}{11}\selectfont

\vspace{0.7cm}

\noindent zhangyu@pfw.edu,

\vspace{0.2 cm}

\noindent Department of Mathematical Sciences, Purdue University Fort Wayne, Fort Wayne, IN 46805-1499, USA.
\end{document}